\begin{document}

\title*{An inexact Petrov-Galerkin approximation for gas transport in pipeline networks}
\author{Herbert Egger \and Thomas Kugler \and Vsevolod Shashkov}
\institute{H. Egger \at TU Darmstadt, Department of Mathematics, \email{egger@mathematik.tu-darmstadt.de}
\and T. Kugler \at TU Darmstadt, Department of Mathematics, \email{kugler@mathematik.tu-darmstadt.de}
\and V. Shashkov \at TU Darmstadt, GSC Computational Engineering, \email{shashkov@gsc.tu-darmstadt.de}}
%
%
\maketitle

\def\myabstract{This paper studies the discretization of gas transport in pipeline networks by an inexact Petrov-Galerkin method. A full convergence analysis is presented for single pipes under the assumption of a linear friction law and the possible extension to pipe networks is discussed. The generalization to nonlinear gas transport models and the efficient implementation by hybridization is investigated numerically.}

\abstract*{\myabstract}

\abstract{\myabstract}

\def\dx{\partial_x}
\def\dt{\partial_t}
\def\dtt{\partial_{tt}}
\def\ddt{\frac{d}{dt}}
\def\RR{\mathbb{R}}
\def\V{\mathcal{V}}
\def\E{\mathcal{E}}
\def\div{\text{\rm div}}

\section{Introduction}
\label{sec:introduction}

The flow of gas in a horizontal pipeline of constant cross section is described by \cite{BrouwerGasserHerty11}
\begin{align}
A \dt \rho + \dx m &= 0 \label{eq:euler1}\\
\dt m + \dx \left(\frac{m^2}{A \rho} + A p\right) &= -\frac{\lambda}{2D} \frac{|m|}{A \rho} m. \label{eq:euler2}
\end{align}
Here $A$ and $D$ are the cross section and diameter of the pipe, and $\lambda$ is a dimensionless friction parameter. The functions $\rho$, $p$, and $m$ describe the density, pressure, and mass flow rate of the gas. 
Under isothermal flow conditions, one has
\begin{align}
p = c^2 \rho \label{eq:euler3}  
\end{align}
with constant $c$ denoting the speed of sound. In practically relevant scaling regimes, the nonlinear term on the 
left hand side of \eqref{eq:euler2} is usually neglected, which can be justified by an asymptotic analysis \cite{BrouwerGasserHerty11,MarcatiMilani90}.
Using this simplification and equation \eqref{eq:euler3} to eliminate the density, one arrives 
at evolution problems of the general form
\begin{align}
a \dt p + \dx m &= 0 \label{eq:sys1} \\
b \dt m + \dx p &= -d m \label{eq:sys2}
\end{align}
where $a$ and $b$ are positive constants and $d=d(p,m)$ denotes a state dependent friction coefficient. 
For our analyis, we will consider $d=d(x)$ as a function depending only on space which can be 
justified, e.g., by linearization around a steady state. 
Corresponding models for the gas flow on pipe networks are obtained by 
coupling the flow equations for single pipes via algebraic conditions \cite{RachfordDupont74,Reigstad14}; see below.

The discretization of \eqref{eq:sys1}--\eqref{eq:sys2} and its extension to pipeline networks 
has been discussed intensively in the literature. 
In \cite{RachfordDupont74}, a Galerkin approximation for \eqref{eq:euler1}--\eqref{eq:euler2}
with cubic Hermite polynomials is investigated numerically. The discretization of transient gas flow models is also studied \cite{BrouwerGasserHerty11,Osiadacz84}. An entropy stable finite volume method is proposed in  \cite{Reigstad14}, and an energy stable mixed finite element approximation is investigated in \cite{Egger18}. 
Apart from \cite{RachfordDupont74}, all methods discussed above are of lowest order 
and no rigorous convergence analysis is given. 
%
%

In this paper, we study the discretization of \eqref{eq:sys1}--\eqref{eq:sys2} by 
a Petrov-Galerkin approach of potentially high order. The resulting scheme is shown to be stable which allows 
us to prove order optimal convergence rates.
By using an appropriate functional analytic setting, the convergence results can be generalized 
almost verbatim to pipeline networks. A hybridization strategy will be discussed that facilitates 
the implementation and that allows to incorporate non-standard coupling conditions. 
The proposed method formally also allow to treat nonlinear models of gas transport and,
in principle, high order convergence can be obtained in practically relevant regimes.

\section{Notation and preliminaries}
\label{sec:2}

Let $x_L < x_R$ and denote by $L^p(x_L,x_R)$ and $W^{k,p}(x_L,x_R)$, $k \ge 0$ the standard Lebesgue and Sobolev spaces.
The scalar product and norm of $L^2(x_L,x_R)$ are written as $(v,w)$ and $\|v\|=\|v\|_{L^2}$. 
Other norms will be designated by subscripts.
We write $H^k(x_L,x_R)=W^{k,2}(x_L,x_R)$ for the Hilbert spaces and define 
\begin{align*}
H^1_0=\{v \in H^1(x_L,x_R) : v(x_L)=v(x_R)=0\} 
\quad \text{and} \quad H(\div) = H^1(x_L,x_R)
\end{align*}
for convenience. 
%
By $L^p(0,T;X)$ and $W^{k,p}(0,T;X)$ we denote the Bochner spaces of functions $f : [0,T] \to X$ with values in $X$. The value of $f(t)$ may then itself be a function. 
In the following, we consider the linear system of differential equations 
\begin{alignat}{2}
a \dt p(x,t) + \dx m(x,t) &= f(x,t),                 \label{eq:prob1}\\
b \dt m(x,t) + \dx p(x,t) + d(x) m(x,t) &= g(x,t),   \label{eq:prob2}
\end{alignat}
for $x_L < x < x_R$ and $t>0$ with homogeneous boundary conditions
\begin{alignat}{2}
p(x_L,t) &= p(x_R,t) = 0. \label{eq:prob3}
\end{alignat}
Inhomogeneous and more general boundary conditions can be considered as well and 
our analysis applies with minor modifications.
We will assume that 
\begin{itemize}
 \item[(A1)] $\quad \ a$, $b$ are positive constants, and 
 \item[(A2)] $\quad \ d \in L^\infty(x_L,x_R)$ with $0 < \underline d \le d(x) \le \overline d$ and constants $\underline d,\overline d$.
\end{itemize}
For given $f,g \in L^2(0,T;L^2(x_L,x_R))$ and
initial values $p(0) \in H^1_0$, $m(0) \in H(\div)$, existence of a unique solution 
follows from semigroup theory. Any smooth solution of problem \eqref{eq:prob1}--\eqref{eq:prob3} 
also satisfies $p(t) \in H^1_0$, $m(t) \in H(\div)$, and 
\begin{align}
(a \dt p(t),\widetilde q) + (\dx m(t),\widetilde q) &= (f(t),\widetilde q) \label{eq:var1}\\
(b \dt m(t),\widetilde v) + (\dx p(t),\widetilde v) + (d m(t),\widetilde v) &= (g(t),\widetilde v) \label{eq:var2}
\end{align}
for all $\widetilde v,\widetilde q \in L^2(x_L,x_R)$ and all $0 < t < T$. 
This variational characterization will 
be the starting point for our discretization approach introduced in the next section.

\section{Petrov-Galerkin approximation}
\label{sec:petrovgalerkin}

Let $x_L=x_0 < x_1 < \ldots < x_N=x_R$ be a partition of the interval $[x_L,x_R]$ into elements 
$T_n=[x_{n-1},x_n]$. We call $T_h := \{T_n : 1 \le n \le N\}$ the mesh and denote by $h_n = |x_{n}-x_{n-1}|$ and $h=\max_{n} h_n$ the local and global mesh size, respectively. 
Let 
\begin{align}
P_k(T_h) := \{v \in L^2(x_L,x_R) : v|_T \in P_k(T) \ \forall T \in T_h\} 
\end{align}
be the space of piecewise polynomials on the mesh $T_h$. 
We fix $k \ge 1$ and search for approximations for the solutions $p(t)$, $m(t)$ of problem \eqref{eq:prob1}--\eqref{eq:prob3} in the spaces
\begin{align}
Q_h = P_k(T_h) \cap H^1_0
\quad \text{and} \quad 
V_h = P_k(T_h) \cap H(\div) \label{eq:QhVh}
\end{align}
of continuous piecewise polynomials with appropriate boundary conditions. 
As finite dimensional test spaces for the variational problem \eqref{eq:var1}--\eqref{eq:var2}, we choose 
\begin{align}
\widetilde Q_h = P_{k-1}(T_h) 
\quad \text{and} \quad  
\widetilde V_h = P_{k-1}(T_h) \label{eq:QhtVht}
\end{align}
consisting of discontinuous piecewise polynomials of lower order $k-1$.
We denote by $I_h^k : H^1(x_L,x_R) \to P_k(T_h) \cap H^1(x_L,x_R)$ the $H^1$-projection operator, defined by
\begin{alignat}{2}
(I_h^k v)(x_{k}) &= v(x_{k}) && \text{for all }  0 \le k \le N,  \\
\text{and} \qquad 
(\dx I_h^k v,\widetilde v_h) &= (\dx v, \widetilde v_h) \qquad && \text{for all } \widetilde v_h \in P_{k-1}(T_h),
\end{alignat}
and let $\pi_h^{k-1} : L^2(x_L,x_R) \to P_{k-1}(T_h)$ be the $L^2$-orthogonal projection, satisfying 
\begin{alignat}{2}
(\pi_h^{k-1} v,\widetilde v_h) &= (v,\widetilde v_h) \qquad && \text{for all } \widetilde v_h \in P_{k-1}(T_h). 
\end{alignat}
Note that both projection operators $I_h^k$ and $\pi_h^{k-1}$ can be defined locally on every element. 
Moreover, they are mutually related other by the {\em commuting diagram} property
\begin{align} \label{eq:commuting}
\dx I_h^k v = \pi_h^{k-1} \dx v \qquad \text{for all } v \in H^1(x_L,x_R). 
\end{align}
For the approximation of problem \eqref{eq:prob1}--\eqref{eq:prob3}, we then use the following 
approximation. 
\begin{problem} \label{prob:semi}%
{\bf (Inexact Petrov-Galerkin method)} 
Find functions $p_h \in H_0^1(0,T;Q_h)$, $m_h \in H^1(0,T;V_h)$ with $p_h(0)=I_h^k p(0)$ and $m_h(0)=I_h^k m(0)$, 
and such that
\begin{align}
(a \dt p_h(t),\widetilde q_h) + (\dx m_h(t),\widetilde q_h) &= (f(t),\widetilde q_h)                \label{eq:var1h}\\
(b \dt m_h(t),\widetilde v_h) + (\dx p_h(t),\widetilde v_h) + (d \pi_h^{k-1} m_h(t),\widetilde v_h) &= (g(t),\widetilde v_h) \label{eq:var2h}
\end{align}
for all $\widetilde q_h \in \widetilde Q_h = P_{k-1}(T_h)$ and $\widetilde v_h \in \widetilde V_h = P_{k-1}(T_h)$, and for all $0 \le t \le T$.
\end{problem}
%
%
The well-posedness of this problem follows from the results of the next section.

\section{Discrete stability estimates}
\label{sec:stability}

We now derive some discrete stability estimates that yield well-posedness of the semidiscrete method 
and that allow us to establish error estimates of optimal order.

\begin{lemma} \label{lem:estimate}
Let $p_h$, $m_h$ denote a solution of Problem~\ref{prob:semi}. 
Then 
\begin{align*}
&a \|\pi_h^{k-1} p_h(t)\|^2 + b \|\pi_h^{k-1} m_h(t)\|^2 \\ 
&\le  C(T) \left(a \|\pi_h^{k-1} p_h(0)\|^2 + b\|\pi_h^{k-1} m_h(0)\|^2 
    + \int_0^t \frac{1}{a}\|\pi_h^{k-1} f(s)\|^2 + \frac{1}{b}\|\pi_h^{k-1} g(s)\|^2 ds \right)\notag
\end{align*}
with constant $C(T) \le C T$ and $C$ independent of $T$ and the solution. 
\end{lemma}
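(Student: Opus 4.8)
The plan is a discrete energy estimate that exploits the compatibility of the trial and test spaces and the particular form of the inexact friction term. Since $p_h(t)\in Q_h$ and $m_h(t)\in V_h$ are continuous piecewise polynomials of degree $k$, their spatial derivatives $\dx p_h(t)$ and $\dx m_h(t)$ lie in $P_{k-1}(T_h)$, and the projected functions $\pi_h^{k-1}p_h(t)$ and $\pi_h^{k-1}m_h(t)$ are admissible test functions in $\widetilde Q_h=\widetilde V_h=P_{k-1}(T_h)$. I would therefore test \eqref{eq:var1h} with $\widetilde q_h=\pi_h^{k-1}p_h(t)$ and \eqref{eq:var2h} with $\widetilde v_h=\pi_h^{k-1}m_h(t)$, and add the two resulting identities.

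For the time-derivative terms, self-adjointness of $\pi_h^{k-1}$ together with the fact that it commutes with $\dt$ gives $(a\dt p_h,\pi_h^{k-1}p_h)=\tfrac a2\ddt\|\pi_h^{k-1}p_h\|^2$, and analogously for the $m_h$-term. For the coupling terms I would use $\dx m_h\in P_{k-1}(T_h)$ and self-adjointness of $\pi_h^{k-1}$ to rewrite $(\dx m_h,\pi_h^{k-1}p_h)=(\dx m_h,p_h)$, integrate by parts over each element — the interface contributions cancel by continuity of $m_h$ and $p_h$, and the boundary contributions vanish on account of \eqref{eq:prob3} — to obtain $-(m_h,\dx p_h)$, and then use $\dx p_h\in P_{k-1}(T_h)$ to arrive at $(\dx m_h,\pi_h^{k-1}p_h)=-(\pi_h^{k-1}m_h,\dx p_h)$. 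This is precisely minus the coupling term $(\dx p_h,\pi_h^{k-1}m_h)$ of the second equation, so the two cancel upon addition. The friction term contributes $(d\,\pi_h^{k-1}m_h,\pi_h^{k-1}m_h)\ge\underline d\,\|\pi_h^{k-1}m_h\|^2\ge 0$ by (A2) and is simply dropped; it is exactly here that the inexact treatment — multiplying $d$ with $\pi_h^{k-1}m_h$ rather than with $m_h$ — is used, since $(d\,m_h,\pi_h^{k-1}m_h)$ would carry no definite sign. On the right-hand side, $(f,\pi_h^{k-1}p_h)=(\pi_h^{k-1}f,\pi_h^{k-1}p_h)$ and likewise for $g$, since $\pi_h^{k-1}p_h,\pi_h^{k-1}m_h\in P_{k-1}(T_h)$.

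Writing $D(t):=a\|\pi_h^{k-1}p_h(t)\|^2+b\|\pi_h^{k-1}m_h(t)\|^2$, the above collapses to
\[
\tfrac12\ddt D(t)\ \le\ (\pi_h^{k-1}f,\pi_h^{k-1}p_h)+(\pi_h^{k-1}g,\pi_h^{k-1}m_h)\ \le\ \Bigl(\tfrac1a\|\pi_h^{k-1}f\|^2+\tfrac1b\|\pi_h^{k-1}g\|^2\Bigr)^{1/2}D(t)^{1/2}
\]
after Cauchy--Schwarz. Dividing by $D(t)^{1/2}$ (regularizing $D$ where it vanishes) shows $\ddt D(t)^{1/2}\le F(t)$ with $F(t)^2:=\tfrac1a\|\pi_h^{k-1}f(t)\|^2+\tfrac1b\|\pi_h^{k-1}g(t)\|^2$, hence $D(t)^{1/2}\le D(0)^{1/2}+\int_0^t F(s)\,ds$. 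Squaring and using $(\alpha+\beta)^2\le 2\alpha^2+2\beta^2$ together with $\bigl(\int_0^t F\bigr)^2\le t\int_0^t F^2$ then yields the asserted bound with $C(T)=2\max\{1,T\}$. The one point that genuinely has to be got right is the cancellation of the coupling terms: it rests on the inclusions $\dx Q_h\subseteq\widetilde V_h$ and $\dx V_h\subseteq\widetilde Q_h$ (equivalently, on the commuting-diagram property \eqref{eq:commuting}) and on the homogeneous boundary conditions, and this is also where the \emph{inexactness} of the scheme pays off. The remaining manipulations are routine, except that the passage from the differential inequality to a constant $C(T)$ growing only linearly in $T$ should be done via the above square-root argument rather than a naive Gronwall estimate, which would produce an exponential factor.
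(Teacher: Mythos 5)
Your proof is correct and follows essentially the same route as the paper: the same choice of test functions $\pi_h^{k-1}p_h$ and $\pi_h^{k-1}m_h$, the same cancellation of the coupling terms via the commuting-diagram property, integration by parts and the boundary conditions \eqref{eq:prob3}, and the same use of the sign of the inexact friction term. The only deviation is the last step, where you replace the paper's Young inequality with weight $\alpha=1/T$ followed by Gronwall by the square-root trick $\ddt D(t)^{1/2}\le F(t)$; both deliver the linear-in-$T$ constant, and your variant is, if anything, slightly cleaner since it avoids the Gronwall exponential entirely.
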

\begin{proof}
Let us start by noting that $(\pi_h^{k-1} q_h,\pi_h^{k-1} q) = (q_h, \pi_h^{k-1} q)$ for all $q \in H^1(x_L,x_R)$.
By testing \eqref{eq:var1h}--\eqref{eq:var2h} with $q_h = \pi_h^{k-1} p_h(t)$ and $v_h = \pi_h^{k-1} m_h(t)$, we then get
\begin{align*}
&\ddt \left(\frac{a}{2}  \|\pi_h^{k-1} p_h(t)\|^2 + \frac{b}{2} \|\pi_h^{k-1} m_h(t)\|^2\right) \\
&= (a \dt p_h(t) , \pi_h^{k-1}p_h(t)) + (b \dt m_h(t) , \pi_h^{k-1} m_h(t)) \\
&= -(\dx m_h(t),\pi_h^{k-1}p_h(t)) - (\dx p_h(t), \pi_h^{k-1} m_h(t)) - (d \pi_h^{k-1} m_h(t), \pi_h^{k-1} m_h(t)) \\
&\qquad + (\pi_h^{k-1}f(t),\pi_h^{k-1}p_h(t)) + (\pi_h^{k-1} g(t),\pi_h^{k-1}m_h(t)).
\end{align*}
By identity \eqref{eq:commuting}, integration-by-parts, and the boundary conditions \eqref{eq:prob3}, 
one can verify that $(\dx m_h(t),\pi_h^{k-1}p_h(t)) + (\dx p_h(t), \pi_h^{k-1} m_h(t)) = 0$.
Via Cauchy-Schwarz and Young inequalities, and using positivity of $d$, we then obtain the estimate
\begin{align*}
&\ddt \left(\frac{a}{2}  \|\pi_h^{k-1} p_h(t)\|^2 + \frac{b}{2} \|\pi_h^{k-1} m_h(t)\|^2\right) \\
&=- (d \pi_h^{k-1} m_h(t), \pi_h^{k-1} m_h(t)) + (\pi_h^{k-1} f(t),\pi_h^{k-1}p_h(t)) + (\pi_h^{k-1} g(t),\pi_h^{k-1}m_h(t)) \\
&\le \frac{\alpha}{2} (a\|\pi_h^{k-1} p_h(t)\|^2 + b\|\pi_h^{k-1} m_h(t)\|^2) 
+ \frac{1}{2\alpha}(\frac{1}{a}\|\pi_h^{k-1}f(t)\|^2 + \frac{1}{b}\|\pi_h^{k-1} g(t)\|^2).
\end{align*}
The Gronwall lemma and the choice $\alpha=1/T$ finally yields the assertion.
\end{proof}
Note that the above estimate does not yet give full control over the solution. 
A repeated application, however, allows us to prove the following stability estimate.
\begin{lemma} \label{lem:stability}
Let $p_h$, $m_h$ denote a solution of Problem~\ref{prob:semi}.
Then 
\begin{align*}
&\|p_h(t) \|^2 + \|m_h(t)\|^2 \\
&\le C'(T) \Big( \|\pi_h^{k-1}p_h(0)\|^2+ \|\pi_h^{k-1}m_h(0)\|^2 + h \|\pi_h^{k-1}\dt p_h(0)\|^2
  + h\|\pi_h^{k-1}\dt m_h(0)\|^2 \\
&\qquad \ + \int_0^t \|\pi_h^{k-1} f(s)\|^2 + \|\pi_h^{k-1}  g(s)\|^2 ds 
         +h \|\pi_h^{k-1} \dt f(s)\|^2 + h\|\pi_h^{k-1} \dt g(s)\|^2 \Big)
\end{align*}
for all $0 \le t \le T$ with $C'(T)=C' T$ and $C'$ independent of $T$ and of the solution.
\end{lemma}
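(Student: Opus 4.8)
The plan is to gain control over $\|p_h(t)\|^2 + \|m_h(t)\|^2$, not just over the projected quantities $\|\pi_h^{k-1}p_h(t)\|^2 + \|\pi_h^{k-1}m_h(t)\|^2$ that Lemma~\ref{lem:estimate} controls. The missing ingredient is an estimate for the ``high-frequency'' parts $(I - \pi_h^{k-1})p_h(t)$ and $(I-\pi_h^{k-1})m_h(t)$. Since $p_h(t),m_h(t)$ are continuous piecewise polynomials of degree $k$, an inverse/approximation estimate on each element gives $\|(I-\pi_h^{k-1})p_h(t)\| \le C h \|\dx p_h(t)\|$ and likewise for $m_h$; combined with $\|\dx m_h(t)\| = \|\pi_h^{k-1}\dx m_h(t)\|$ one would like to bound $\|\dx p_h(t)\|$ and $\|\dx m_h(t)\|$ by quantities involving $\dt p_h$ and $\dt m_h$. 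Indeed, testing \eqref{eq:var1h} with $\widetilde q_h = \pi_h^{k-1}\dx m_h(t) = \dx m_h(t)$ (using \eqref{eq:commuting}, since $\dx m_h(t)\in P_{k-1}(T_h)$) yields $\|\dx m_h(t)\|^2 = (f(t) - a\dt p_h(t), \dx m_h(t))$, hence $\|\dx m_h(t)\| \le \|\pi_h^{k-1}f(t)\| + a\|\pi_h^{k-1}\dt p_h(t)\|$; similarly from \eqref{eq:var2h}, testing with $\dx p_h(t)$, one gets $\|\dx p_h(t)\| \le \|\pi_h^{k-1}g(t)\| + b\|\pi_h^{k-1}\dt m_h(t)\| + \overline d\,\|\pi_h^{k-1}m_h(t)\|$. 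Putting these together with the element-wise approximation bound gives $\|p_h(t)\|^2 + \|m_h(t)\|^2 \le 2(\|\pi_h^{k-1}p_h(t)\|^2 + \|\pi_h^{k-1}m_h(t)\|^2) + C h^2(\|\pi_h^{k-1}\dt p_h(t)\|^2 + \|\pi_h^{k-1}\dt m_h(t)\|^2 + \|\pi_h^{k-1}f(t)\|^2 + \|\pi_h^{k-1}g(t)\|^2 + \|\pi_h^{k-1}m_h(t)\|^2)$.

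The first bracket is handled directly by Lemma~\ref{lem:estimate}. For the terms involving $\dt p_h(t)$ and $\dt m_h(t)$ I would differentiate the scheme \eqref{eq:var1h}--\eqref{eq:var2h} in time: since $f,g$ are admissible data and the spaces $Q_h,V_h,\widetilde Q_h,\widetilde V_h$ are time-independent, the pair $(\dt p_h, \dt m_h)$ solves a problem of exactly the same structure as Problem~\ref{prob:semi} but with data $(\dt f, \dt g)$ and initial values $(\dt p_h(0), \dt m_h(0))$. Applying Lemma~\ref{lem:estimate} to this differentiated system bounds $a\|\pi_h^{k-1}\dt p_h(t)\|^2 + b\|\pi_h^{k-1}\dt m_h(t)\|^2$ by $C(T)$ times $a\|\pi_h^{k-1}\dt p_h(0)\|^2 + b\|\pi_h^{k-1}\dt m_h(0)\|^2 + \int_0^t \tfrac1a\|\pi_h^{k-1}\dt f(s)\|^2 + \tfrac1b\|\pi_h^{k-1}\dt g(s)\|^2\,ds$. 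Multiplying this by $h^2$ and using $h^2 \le (\text{const})\cdot h$ on a bounded domain (or simply carrying the $h^2$), and combining with the direct application of Lemma~\ref{lem:estimate} and Gronwall if needed to absorb the residual $h^2\|\pi_h^{k-1}m_h(t)\|^2$ term, produces the asserted bound with $C'(T) = C'T$; the stated right-hand side carries $h$ rather than $h^2$, which is a harmless weakening under the standing assumption $h \le \overline h$.

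The main technical point — and the step I would be most careful about — is the treatment of $\pi_h^{k-1}\dt p_h(0)$ and $\pi_h^{k-1}\dt m_h(0)$: these are not part of the data of Problem~\ref{prob:semi}, so one must express $\dt p_h(0)$ and $\dt m_h(0)$ through the equations evaluated at $t=0$. From \eqref{eq:var1h} at $t=0$, $(a\dt p_h(0),\widetilde q_h) = (f(0) - \dx m_h(0), \widetilde q_h)$, so $a\pi_h^{k-1}\dt p_h(0) = \pi_h^{k-1}f(0) - \dx m_h(0) = \pi_h^{k-1}f(0) - \pi_h^{k-1}\dx m(0)$ using $m_h(0) = I_h^k m(0)$ and \eqref{eq:commuting}; a similar identity holds for $\dt m_h(0)$ from \eqref{eq:var2h}. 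These are bounded by the initial data $p(0),m(0)$ and $f(0),g(0)$; if one prefers to keep the statement purely in terms of $p_h(0),m_h(0)$ (as written), one simply leaves $h\|\pi_h^{k-1}\dt p_h(0)\|^2$ and $h\|\pi_h^{k-1}\dt m_h(0)\|^2$ on the right-hand side as they appear in the lemma. The rest is routine bookkeeping: collecting constants, using $a,b$ bounded away from $0$ and $\infty$ by (A1), and $0<\underline d\le d\le\overline d$ by (A2), and absorbing the final lower-order term by Gronwall.
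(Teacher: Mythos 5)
Your proposal is correct and follows essentially the same route as the paper: split $\|p_h\|$, $\|m_h\|$ into the projected parts (controlled by Lemma~\ref{lem:estimate}) plus $h$ times $\|\dx p_h\|$, $\|\dx m_h\|$; bound the latter by testing \eqref{eq:var1h}--\eqref{eq:var2h} with $\dx m_h(t)$ and $\dx p_h(t)$; and control $\|\pi_h^{k-1}\dt p_h\|$, $\|\pi_h^{k-1}\dt m_h\|$ by differentiating the scheme in time and reapplying Lemma~\ref{lem:estimate}. Your extra remarks (that $\dx m_h(t)\in P_{k-1}(T_h)$ is an admissible test function, the $h^2$ versus $h$ bookkeeping, and the representation of $\dt p_h(0)$, $\dt m_h(0)$ through the equations at $t=0$) only make explicit details the paper leaves implicit.
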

\begin{proof}
As a direct consequence of the Poincar\'e inequality, one has 
\begin{align*}
\|p_h\| \le \|\pi_h^{k-1} p_h\| + h\|\dx p_h\|
\quad \text{and} \quad 
\|m_h\| \le \|\pi_h^{k-1} m_h\| + h\|\dx m_h\|. 
\end{align*}
The first terms in these estimates are already covered by Lemma~\ref{lem:estimate}. 
From the two equations \eqref{eq:var1h}--\eqref{eq:var2h} with $\widetilde q_h = \dx m_h(t)$ 
and $\widetilde v_h = \dx p_h(t)$, we further deduce that 
\begin{align*}
\|\dx m_h(t)\|^2 
&\le (\|\pi_h^{k-1}f(t)\| + a \|\pi_h^{k-1} \dt p_h(t)\|) \|\dx m_h(t)\| \qquad \text{and}\\
\|\dx p_h(t)\|^2 
&\le (\| \pi_h^{k-1} g(t) \| + b \|\pi_h^{k-1} \dt m_h(t) \| + \overline d \|\pi_h^{k-1} m_h(t)\|) \|\dx p_h(t)\|.  
\end{align*}
Bounds for $\|\pi_h^{k-1}\dt p_h(t)\|$ and $\|\pi_h^{k-1} \dt m_h(t)\|$ can be obtained by formally 
differentiating \eqref{eq:var1h}--\eqref{eq:var2h} with respect to time and applying Lemma~\ref{lem:estimate} 
for the resulting system. 
A combination of the above estimates then yields the assertion of the lemma. 
\end{proof}
%
%
\begin{remark} \label{rem:wellposedh}
Let us note that Problem~\ref{prob:semi} formally amounts to a finite dimensional system of differential algebraic equations. From the stability estimates of Lemma~\ref{lem:stability} and \cite[Theorem~4.12]{KunkelMehrmann06},
one can deduce that this system is solvable for any choice of admissible initial values. The semidiscretization is thus well-defined. 
\end{remark}

\section{Error estimates}
\label{sec:errorestimates}

We start by decomposing the error via
$\|p-p_h\| \le \|p-I_h^k p\| + \|I_h^k p - p_h\|$  and $\|m-m_h\| \le \|m-I_h^k m\| + \|I_h^k m - m_h\|$
into approximation and discrete error components. 
For the first part, we can utilize the following well-known estimates.
\begin{lemma} \label{lem:approx}
Let $w \in H^{s+1}(T_h)$, $0 \le s \le k$. Then 
\begin{align} 
\|w - I_h^{k-1} w\| \le h^{s+1} |w|_{s+1;h}. 
\end{align}
For any $w \in L^2(x_L,x_R) \cap H^{s}(T_h)$, $0 \le s \le k$, one has
\begin{align}
\|w - \pi_h^{k-1} w\| \le h^s |w|_{s;h}. 
\end{align}
Here $H^s(T_h) = \{w \in L^2(x_L,x_R) : w|_T \in H^s(T)\}$ is the space of piecewise smooth functions
and $|w|_{s;h}:=(\sum_T \|\dx^{s} w\|_{L^2(T)}^2)^{1/2}$ is the corresponding seminorm.
\end{lemma}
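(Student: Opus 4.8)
The plan is to prove both estimates by localizing to a single element $T$ and combining a scaling argument with the Bramble-Hilbert lemma. For the $L^2$-projection bound, I would fix $T \in T_h$ with $h_T = |T|$ and pass to the reference element $\hat T = [0,1]$ via the affine map $F_T$. On $\hat T$, the operator $w \mapsto w - \hat\pi^{k-1} w$ (the $L^2$-projection onto $P_{k-1}(\hat T)$) is a bounded linear map $H^s(\hat T) \to L^2(\hat T)$ that annihilates $P_{s-1}(\hat T) \subseteq P_{k-1}(\hat T)$ for $0 \le s \le k$, so the Bramble-Hilbert lemma gives $\|\hat w - \hat\pi^{k-1}\hat w\|_{L^2(\hat T)} \le C\,|\hat w|_{s;\hat T}$. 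Scaling back with the standard relations $\|v\|_{L^2(T)}^2 = h_T \|\hat v\|_{L^2(\hat T)}^2$ and $|v|_{s;T}^2 = h_T^{1-2s}|\hat v|_{s;\hat T}^2$, and using that $\pi_h^{k-1}$ acts elementwise (commuting with $F_T$), yields $\|w - \pi_h^{k-1} w\|_{L^2(T)} \le C\, h_T^{s}\,|w|_{s;T}$. Summing the squares over all $T$ and bounding $h_T \le h$ gives the stated global estimate. (As usual one should track that the constant from Bramble-Hilbert depends only on $k$ and $\hat T$, hence is mesh-independent; the paper writes the bound without an explicit $C$, presumably absorbing it or normalizing, so I would follow that convention.)

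For the first estimate I would argue the same way for the $H^1$-projection $I_h^{k-1}$, but here I should be a little careful: the operator is defined so that $I_h^{k-1} w$ matches $w$ at the mesh nodes and matches the derivative against $P_{k-2}(T_h)$ in $L^2$. On the reference element this is again a bounded linear projector onto $P_{k-1}(\hat T)$ — boundedness $H^{s+1}(\hat T) \to L^2(\hat T)$ follows since point evaluation is continuous on $H^{s+1}(\hat T)$ for $s+1 \ge 1$ — and it reproduces $P_{k-1}(\hat T)$, in particular $P_s(\hat T)$ for $0 \le s \le k$. So Bramble-Hilbert gives $\|\hat w - \hat I^{k-1}\hat w\|_{L^2(\hat T)} \le C\,|\hat w|_{s+1;\hat T}$, and scaling back as above (now with the $(s+1)$-seminorm) produces the factor $h_T^{s+1}$. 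Again I sum over elements.

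The main obstacle is purely the bookkeeping of the scaling exponents and making sure the reference-element operators are genuinely well-defined and bounded on the relevant Sobolev spaces — in particular that nodal point evaluation in the definition of $I_h^{k-1}$ makes sense, which it does on $H^{s+1}$ with $s \ge 0$ in one space dimension by the Sobolev embedding $H^1(\hat T) \hookrightarrow C(\overline{\hat T})$. There is no analytic difficulty beyond that; this is the classical approximation-theory argument, which is exactly why the paper calls the estimates ``well-known'' and presumably states them without proof or with only a one-line pointer to a standard reference (e.g. Ciarlet or Brenner–Scott). I would therefore keep the write-up short: localize, scale to $\hat T$, invoke Bramble-Hilbert on a projector that fixes $P_{k-1}$, scale back, sum.
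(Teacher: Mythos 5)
The paper states this lemma without proof, simply calling the estimates well-known, so there is no in-paper argument to compare against; your localize--scale--Bramble-Hilbert--scale-back strategy is exactly the standard route and is the one the authors implicitly have in mind. For the $L^2$-projection bound your argument is complete and correct: $\pi_h^{k-1}$ reproduces $P_{s-1}(T)\subseteq P_{k-1}(T)$ for every $0\le s\le k$, the scaling exponents you record are right, and summing the squared local bounds gives the global estimate (up to the generic constant $C$, which the paper indeed suppresses).

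For the first estimate there is one genuine gap at the endpoint $s=k$. You write that the projector onto $P_{k-1}(\hat T)$ ``reproduces $P_{k-1}(\hat T)$, in particular $P_s(\hat T)$ for $0\le s\le k$,'' but $P_k\not\subseteq P_{k-1}$, so for $s=k$ the error operator $w\mapsto w-\hat I^{k-1}\hat w$ does \emph{not} annihilate $P_k(\hat T)$ and Bramble--Hilbert cannot deliver the $|\hat w|_{k+1;\hat T}$ seminorm on the right-hand side; indeed no operator with range in $P_{k-1}$ can achieve the rate $h^{k+1}$ for general $H^{k+1}$ functions. The issue originates in the lemma's statement itself: the superscript $k-1$ on $I_h$ is almost certainly a typo for $k$, since everywhere else (the error decomposition in Section~5 and the $h^{k+1}$ rate of Theorem~1) the paper uses $I_h^k$, whose range $P_k(T_h)\cap H^1$ does contain $P_s$ for all $s\le k$, making your argument go through verbatim. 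You should either carry out the proof for $I_h^k$ (the operator actually used later), or, if you insist on $I_h^{k-1}$, restrict the range to $0\le s\le k-1$. The remaining ingredients --- boundedness of the reference-element interpolant on $H^{s+1}(\hat T)$ via the one-dimensional embedding $H^1(\hat T)\hookrightarrow C(\overline{\hat T})$, and commutation of both projectors with the affine pullback --- are handled correctly.
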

Using equations \eqref{eq:var1}--\eqref{eq:var2} and \eqref{eq:var1h}--\eqref{eq:var2h} 
characterizing the continuous and the discrete solutions, one can see that 
the discrete error components $\widehat p_h(t) := I_h^k p(t) - p_h(t)$ and $\widehat m_h(t):=I_h^k m(t)-m_h(t)$ satisfy equations \eqref{eq:var1h}--\eqref{eq:var2h} with initial values $\widehat p_h(0)=0$ and $\widehat m_h(0)=0$, and right hand sides given by
\begin{align*}
\widehat f(t) &:= a (I_h^{k} \dt p(t) - \dt p(t))
\quad \text{and} \quad \\
\widehat g(t) &:= b (I_h^{k} \dt m(t) -\dt  m(t)) + d(\pi_h^{k-1} I_h^k m(t) - m(t)).  
\end{align*}
By the a-priori estimates of Lemma~\ref{lem:stability}, one then obtains the following result.
\begin{lemma} \label{lem:discrete}
Let $d \in P_0(T_h)$ be piecewise constant. Then for all $0 \le t \le T$ one has
\begin{align*} 
&\|I_h^k p(t) - p_h(t)\|^2 + \|I_h^k m(t) - m_h(t)\|^2 \\ 
&\le C''(T) \Big( h\|I_h^k \dt p(0) - \dt p(0)\|^2 + h\|I_h^k \dt m(0) - \dt m(0)\|^2 \\
&\qquad + \int_0^t \|I_h^k m(s) - m(s)\|^2 + \|I_h^k \dt p(s) - \dt p(s)\|^2 + \|I_h^k \dt m(s) - \dt m(s)\|^2 \\
&\qquad \qquad \qquad \qquad 
+ h \|I_h^k \dtt p(s) - \dtt p(s)\|^2 + h \|I_h^k \dtt m(s) - \dtt m(s)\|^2 ds\Big),
\end{align*}
with a constant $C''(T) = C'' T$ and $C''$ independent of $T$ and of the solution. 
\end{lemma}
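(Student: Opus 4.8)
The plan is to apply the stability estimate of Lemma~\ref{lem:stability} to the discrete error system. As recalled in the discussion preceding the statement, the functions $\widehat p_h(t)=I_h^k p(t)-p_h(t)$ and $\widehat m_h(t)=I_h^k m(t)-m_h(t)$ solve \eqref{eq:var1h}--\eqref{eq:var2h} with vanishing initial values and with right hand sides $\widehat f$, $\widehat g$. Lemma~\ref{lem:stability} therefore applies with $(p_h,m_h,f,g)$ replaced by $(\widehat p_h,\widehat m_h,\widehat f,\widehat g)$; the terms $\|\pi_h^{k-1}\widehat p_h(0)\|^2$ and $\|\pi_h^{k-1}\widehat m_h(0)\|^2$ on its right hand side vanish, and it remains to bound $h\|\pi_h^{k-1}\dt\widehat p_h(0)\|^2$, $h\|\pi_h^{k-1}\dt\widehat m_h(0)\|^2$ and the four integral terms in terms of $\widehat f$, $\widehat g$ and their time derivatives.

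For the two initial terms I would evaluate \eqref{eq:var1h}--\eqref{eq:var2h} for $(\widehat p_h,\widehat m_h)$ at $t=0$; since $\widehat p_h(0)=\widehat m_h(0)=0$, and hence also $\dx\widehat p_h(0)=0$ and $\pi_h^{k-1}\widehat m_h(0)=0$, testing with $\widetilde q_h,\widetilde v_h\in P_{k-1}(T_h)$ gives $a\,\pi_h^{k-1}\dt\widehat p_h(0)=\pi_h^{k-1}\widehat f(0)$ and $b\,\pi_h^{k-1}\dt\widehat m_h(0)=\pi_h^{k-1}\widehat g(0)$. Thus everything is controlled by the $L^2$-norms of $\pi_h^{k-1}\widehat f$, $\pi_h^{k-1}\widehat g$, $\pi_h^{k-1}\dt\widehat f$ and $\pi_h^{k-1}\dt\widehat g$. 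Here the hypothesis $d\in P_0(T_h)$ enters decisively: as $d$ is constant on every element, $\pi_h^{k-1}(d\,w)=d\,\pi_h^{k-1}w$, and therefore $\pi_h^{k-1}\big(d(\pi_h^{k-1}I_h^k m-m)\big)=d\,\pi_h^{k-1}(I_h^k m-m)$, the inner projection $\pi_h^{k-1}I_h^k m$ telescoping against $\pi_h^{k-1}m$. Using this together with the $L^2$-stability $\|\pi_h^{k-1}\,\cdot\,\|\le\|\,\cdot\,\|$, the bound $\|d\|_\infty\le\overline d$, and the commutation of $I_h^k$ and $\pi_h^{k-1}$ with $\dt$ (both being defined by time-independent linear conditions), one obtains for each $t$
\begin{align*}
\|\pi_h^{k-1}\widehat f(t)\| &\le a\,\|I_h^k\dt p(t)-\dt p(t)\|,\\
\|\pi_h^{k-1}\dt\widehat f(t)\| &\le a\,\|I_h^k\dtt p(t)-\dtt p(t)\|,\\
\|\pi_h^{k-1}\widehat g(t)\| &\le b\,\|I_h^k\dt m(t)-\dt m(t)\|+\overline d\,\|I_h^k m(t)-m(t)\|,\\
\|\pi_h^{k-1}\dt\widehat g(t)\| &\le b\,\|I_h^k\dtt m(t)-\dtt m(t)\|+\overline d\,\|I_h^k\dt m(t)-\dt m(t)\|.
\end{align*}
Inserting these into Lemma~\ref{lem:stability}, absorbing the contribution $h\|I_h^k\dt m-\dt m\|^2$ coming from $\pi_h^{k-1}\dt\widehat g$ into the integral of $\|I_h^k\dt m-\dt m\|^2$ (admissible for bounded mesh size), and collecting all remaining factors (which depend only on $a$, $b$, $\overline d$) into one constant $C''$ then yields the assertion; the friction term also produces an initial contribution of the form $h\|I_h^k m(0)-m(0)\|^2$, which is of the same order as $\int_0^t h\|I_h^k\dtt m(s)-\dtt m(s)\|^2\,ds$ and hence does not affect the rates.

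The one step requiring genuine care is precisely the commutation $\pi_h^{k-1}(d\,w)=d\,\pi_h^{k-1}w$, which fails for general $d\in L^\infty$. Without the piecewise-constant assumption one is left with $\pi_h^{k-1}\big(d(\pi_h^{k-1}I_h^k m-m)\big)$ and must estimate $\|(\mathrm{Id}-\pi_h^{k-1})I_h^k m\|$ in order to recover $m$ inside the friction term; this bound is in general one order short of optimal, so that a modified friction term in Problem~\ref{prob:semi} or a more refined, duality-type argument would become necessary. Apart from this point the proof is bookkeeping on top of Lemma~\ref{lem:stability}, the commuting diagram \eqref{eq:commuting}, and the $L^2$-stability of the two projectors; combining the result with the approximation estimates of Lemma~\ref{lem:approx} then gives the convergence rates.
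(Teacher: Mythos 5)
Your proof is correct and follows essentially the same route as the paper: apply Lemma~\ref{lem:stability} to the error system $(\widehat p_h,\widehat m_h)$ with vanishing initial data, bound the initial time-derivative terms by evaluating the equations at $t=0$, and use the piecewise constancy of $d$ to commute $\pi_h^{k-1}$ past the friction coefficient so that $\|\pi_h^{k-1}\widehat g\|$ is controlled by $\|I_h^k m-m\|$ and $\|I_h^k\dt m-\dt m\|$. Your explicit tracking of the extra (higher-order) initial friction contribution is a point the paper passes over with "in a similar manner," and your remarks on where the argument would break for non-constant $d$ match the paper's subsequent remark.
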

\begin{proof}
We apply Lemma~\ref{lem:stability} for $\widehat p_h(t) = I_h^k p(t) - p_h(t)$ and $\widehat m_h(t) = I_h^k m(t) - m_h(t)$ and then estimate the terms on the right hand side of the result step by step.
By definition of the initial values, we have $\widehat p_h(0)=\widehat m_h(0)=0$. 
Moreover, 
\begin{align*}
\pi_h^{k-1} \dt p_h(0) 
&= \pi_h^{k-1} f(0) -\dx m_h(0) 
= \pi_h^{k-1} f(0) -\dx I_h^k m(0) \\
&= \pi_h^{k-1} f(0) -\pi_h^{k-1} \dx m(0) 
= \pi_h^{k-1} \dt p(0),
\end{align*}
where we used the definition of the initial value $m_h(0)$ in the second and \eqref{eq:commuting} in the third step.
Thus $\|\pi_h^{k-1} \dt \widehat p_h(0)\| \le \|I_h^k \dt p(0) - \dt p(0)\|$, and in a similar manner, one can show 
$\|\pi_h^{k-1} \dt \widehat m_h(0)\| \le \|I_h^k \dt m(0) - \dt m(0)\|$.
This explains the first two terms in the estimate in the lemma. 
The terms under the integral are derived by estimating $\|\pi_h^{k-1} \widehat f(t)\|$, 
$\|\pi_h^{k-1} \widehat g(t)\|$ and the derivatives $\|\pi_h^{k-1} \dt \widehat f(t)\|$, $\|\pi_h^{k-1} \dt \widehat g(t)\|$ via the triangle inequality, and noting that 
\begin{align*}
\pi_h^{k-1} (d \pi_h^{k-1} I_h^k m(t) - d m(t)) = d \pi_h^{k-1} (I_h^k m(t) - m(t)),
\end{align*}
where we used that $d$ is piecewise constant. 
\end{proof}
\begin{remark}
A similar result can be proven for piecewise smooth $d \in W^{1,\infty}(T_h)$
and additional terms of the form $\|d-\pi_h^0 d\| \|\pi_h^{k-1} p(t) - p(t)\|$ arise. 
For $d \in W^{1,\infty}(T_h)$, the product of the two terms again has optimal approximation order. 
\end{remark}
By combination of the above estimates, we finally obtain the following result.
\begin{theorem} \label{thm:convergence}
Let (A1)--(A2) hold and $d \in W^{1,\infty}(T_h)$. Furthermore, let $(p,m)$ be a sufficiently smooth 
solution of \eqref{eq:prob1}--\eqref{eq:prob3}. Then for all $0 \le t \le T$, one has 
\begin{align*}
\|p(t) - p_h(t)\| + \|m(t) - m_h(t)\| \le C(u,p,T) h^{k+1}.
\end{align*}
\end{theorem}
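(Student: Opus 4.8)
The plan is to combine the triangle-inequality splitting already set up before Lemma~\ref{lem:discrete} with the interpolation estimates of Lemma~\ref{lem:approx} applied to the right-hand side of Lemma~\ref{lem:discrete}. Concretely, I would write
$$
\|p(t)-p_h(t)\| \le \|p(t)-I_h^k p(t)\| + \|I_h^k p(t)-p_h(t)\|,
$$
and similarly for $m$, then bound the first (approximation) term directly by Lemma~\ref{lem:approx} with $s=k$, giving $h^{k+1}|p(t)|_{k+1;h}$ and $h^{k+1}|m(t)|_{k+1;h}$. For the second (discrete-error) term I would invoke Lemma~\ref{lem:discrete}; note this lemma is stated for piecewise-constant $d$, but the remark immediately following explains that the same conclusion holds for $d\in W^{1,\infty}(T_h)$ up to additional terms of the form $\|d-\pi_h^0 d\|\,\|\pi_h^{k-1}p(t)-p(t)\|$ which, by Lemma~\ref{lem:approx} applied to each factor, are again of order $h\cdot h^k = h^{k+1}$. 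So the hypothesis $d\in W^{1,\infty}(T_h)$ in the theorem is exactly what makes this work.

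Next I would estimate each term appearing on the right-hand side of (the $W^{1,\infty}$ version of) Lemma~\ref{lem:discrete}. Every term there is of the form $\|I_h^k w - w\|$ or $h^{1/2}\|I_h^k w - w\|$ (in squared form $h\|\cdot\|^2$) with $w$ one of $\dt p,\dt m,\dtt p,\dtt m$ at time $0$ or under the integral. Applying Lemma~\ref{lem:approx} with $s=k$ to each, $\|I_h^k w - w\| \le h^{k+1}|w|_{k+1;h}$, the non-$h$-weighted terms contribute $h^{2(k+1)}$ to the squared bound, while the $h$-weighted terms contribute $h\cdot h^{2(k+1)}$, which is of even higher order and hence harmless. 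Summing and taking square roots gives $\|I_h^k p(t)-p_h(t)\| + \|I_h^k m(t)-p_h(t)\| \le C\, h^{k+1}$, where the constant collects $C''(T)$ together with $\sup_{[0,T]}$ of the seminorms $|\dt p|_{k+1;h}$, $|\dt m|_{k+1;h}$, $|\dtt p|_{k+1;h}$, $|\dtt m|_{k+1;h}$ and $\|\dtt p\|_{L^2(0,T;H^{k+1}(T_h))}$ etc. — all of which are finite by the assumed smoothness of $(p,m)$. Absorbing these into the constant $C(u,p,T)$ (here $u$ presumably refers to the data / exact state) completes the estimate. The two pieces combine by the triangle inequality to yield the claimed bound.

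The only genuine subtlety — and the step I would flag as the main point to get right — is the bookkeeping of the time-regularity: Lemma~\ref{lem:discrete} needs $I_h^k\dtt p - \dtt p$ and $I_h^k\dtt m - \dtt m$ in $L^2$ in time, so one must know that $\dtt p(t),\dtt m(t)\in H^{k+1}(T_h)$ for a.e.\ $t$ with the relevant seminorm in $L^2(0,T)$; this is what "sufficiently smooth solution" is meant to supply, and it can be justified by differentiating the PDE system \eqref{eq:prob1}--\eqref{eq:prob3} in time and using elliptic/spatial regularity of the stationary part together with smoothness of the data $f,g$ and compatible initial data. A secondary point is that $I_h^k$ and $\pi_h^{k-1}$ commute with $\dt$ (they are time-independent linear operators), so $I_h^k\dt p = \dt I_h^k p$ etc., which is what lets one read the error terms as interpolation errors of time-derivatives; this is routine but should be noted. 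Everything else is a direct substitution of Lemma~\ref{lem:approx} into Lemma~\ref{lem:discrete} and collecting constants.
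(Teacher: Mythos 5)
Your proposal is correct and follows essentially the same route as the paper, which proves Theorem~\ref{thm:convergence} precisely by combining the splitting $\|p-p_h\|\le\|p-I_h^k p\|+\|I_h^k p-p_h\|$ with Lemma~\ref{lem:approx}, Lemma~\ref{lem:discrete}, and the remark extending the latter to $d\in W^{1,\infty}(T_h)$. Your additional observations (the $h$-weighted terms being of higher order, the commutation of the projections with $\dt$, and the time-regularity bookkeeping) are exactly the details the paper leaves implicit in its one-line ``combination of the above estimates.''
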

For sufficiently smooth solutions, the proposed method thus yields convergence with the optimal order
that can be expected.

\section{Extension to networks}
\label{sec:networks}

We now illustrate that our method and the convergence results of the previous section can be 
generalized easily to pipe networks. Let $(\V,\E)$ denote a directed graph with vertices $v \in \V$ 
and edges $e \in \E$; see Figure~\ref{fig:network} for illustration.
\begin{figure}[ht!]
\centering
\begin{tikzpicture}[scale=.5]
\small
\node[circle,draw,inner sep=2pt] (v1) at (0,2) {$v_1$};
\node[circle,draw,inner sep=2pt] (v2) at (3,2) {$v_2$};
\node[circle,draw,inner sep=2pt] (v3) at (6,4) {$v_3$};
\node[circle,draw,inner sep=2pt] (v4) at (6,0) {$v_4$};
\node[circle,draw,inner sep=2pt] (v5) at (9,2) {$v_5$};
\node[circle,draw,inner sep=2pt] (v6) at (12,2) {$v_6$};
\draw[->,thick,line width=1.5pt] (v1) -- node[above] {$e_1$} ++(v2);
\draw[->,thick,line width=1.5pt] (v2) -- node[above,sloped] {$e_2$} ++(v3);
\draw[->,thick,line width=1.5pt] (v2) -- node[above,sloped] {$e_3$} ++(v4);
\draw[->,thick,line width=1.5pt] (v3) -- node[above,sloped,rotate=180] {$e_4$} ++(v4);
\draw[->,thick,line width=1.5pt] (v3) -- node[above,sloped] {$e_5$} ++(v5);
\draw[->,thick,line width=1.5pt] (v4) -- node[above,sloped] {$e_6$} ++(v5);
\draw[->,thick,line width=1.5pt] (v5) -- node[above,sloped] {$e_7$} ++(v6);
\end{tikzpicture}
\caption{\label{fig:network}Directed graph $(\V,\E)$ modeling the pipe network topology used for numerical tests.}
\end{figure}
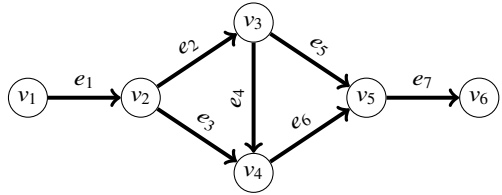
For any edge $e=(v_1,v_2)$, we define $n^e(v_1)=-1$ and $n^e(v_2)=1$. The matrix $N$ with entries $N_{ij}=n_{ej}(v_i)$ then is the incidence matrix of the graph.
For any vertex $v \in \V$, we define $\E(v)=\{e : e = (v,\cdot) \text{ or } e=(\cdot,v)\}$,
and we set $V_0 = \{v \in \V : |\E(v)|>1\}$ and $\V_\partial = \{v \in \V : |\E(v)|=1\}$ which gives a decomposition $\V = \V_0 \cup \V_\partial$ into interior and boundary vertices. 

To every edge $e$, we associate a positive length $\ell^e$, and we identify $e$ with 
$[0,\ell^e]$ in the sequel. This allows us to define spaces 
$L^p(\E) = \{ v : v|_e \in L^p(e)\}$ and $H^1(\E) = \{v \in L^p(\E) : v|_e \in H^1(e)\}$
of, respectively, integrable and piecewise smooth functions on the graph. 
The flow of gas in a pipe network is then described as follows: On every edge $e$ representing a pipe,
we require that
\begin{align} 
a^e \dt p^e + \dx m^e &=  f^e              \label{eq:network1}\\
b^e \dt m^e + \dx p^e + d^e m^e &= g^e,    \label{eq:network2}
\end{align}
where $f^e = f|_e$ denotes the restriction of a function $f \in L^p(\E)$ to one edge. 
The equations for the individual pipes are coupled by algebraic conditions 
\begin{alignat}{2}
\sum\nolimits_{e \in \E(v)} m^e(v) n^e(v) &= 0 \qquad &&v \in \V_0 \label{eq:network3}\\
p^e(v) &= p^{e'}(v)  \qquad && v \in \V_0, \ e,e' \in \E(v)       \label{eq:network4}
\end{alignat}
at the pipe junctions, and at the boundary vertices, we assume that 
\begin{align}
p^e(v) &= 0 \qquad v \in \V_\partial. \label{eq:network5}
\end{align}
Inhomogeneous and other types of boundary conditions can again be incorporated with minor modifications.
For the analysis of the problem, we now utilize the spaces
\begin{align}
  H^1_0 &:= \{p \in H^1(\E) : \eqref{eq:network4} \text{ and } \eqref{eq:network5} \text{ are valid}\}  \\
H(\div) &:= \{m \in H^1(\E) : \eqref{eq:network3} \text{ is valid}\} 
\end{align}
which are the natural generalization of those used for the analysis on a single pipe.
Any solution $(p,m)$ of \eqref{eq:network1}--\eqref{eq:network5} then again satisfies 
$p(t) \in H^1_0$, $m(t) \in H(\div)$, and 
\begin{align}
(a \dt p(t), \widetilde q) + (\dx m(t),\widetilde q ) &= (f(t),\widetilde q) \label{eq:var1n}\\
(b \dt m(t), \widetilde v) + (\dx p(t),\widetilde v ) + (d m(t), \widetilde v) &= (g(t),\widetilde q) \label{eq:var2n}
\end{align}
for all $\widetilde q \in L^2(\E)$, $\widetilde v \in L^2(\E)$, and all $0 < t < T$. 
Here $(v,w) = \sum_e (v^e,w^e)_e$ with $(v^e,w^e)_e = \int_e v^e w^e dx$ denotes the scalar product on $L^2(\E)$.

\begin{remark}
Let us note that \eqref{eq:var1n}--\eqref{eq:var2n} has exactly the same form as the variational problem \eqref{eq:var1h}--\eqref{eq:var2h} on a single pipe. The inexact Petrov-Galerkin method and all results 
derived in the previous sections therefore translate almost verbatim to the network setting;
let us refer to \cite{EggerKugler18} for details and similar results for a different method, and to Section~\ref{sec:numerics} for numerical illustration.
\end{remark}

\section{Remarks on the efficient implementation}
\label{sec:implementation}

In the discretization of \eqref{eq:var1n}--\eqref{eq:var2n}, also compare with \eqref{eq:var1h}--\eqref{eq:var2h},
the continuity and boundary conditions \eqref{eq:network3}--\eqref{eq:network5} are directly incorporated in 
the definition of the spaces $Q_h \subset H^1_0$ and $V_h \subset H(\div)$. 
For the implementation, it may be more convenient to use larger spaces
$Q_h,V_h \subset H^1(\E)$, and to enforce some of the boundary and coupling conditions 
\eqref{eq:network3}--\eqref{eq:network5} explicitly by additional equations. 
Using the wording of \cite{BoffiBrezziFortin13}, this approach of relaxing continuity conditions 
might be called {\em hybridization}.
Since the resulting method is algebraically equivalent to the original scheme based on 
function spaces with incorporated coupling and boundary conditions, all results 
of the previous sections apply verbatim also to the method obtained after hybridization.

\section{Nonlinear problems}
\label{sec:nonlinear}

The formal extension of the Petrov-Galerkin method to nonlinear problems is straight-forward. 
The discrete variational formulation for \eqref{eq:euler1}--\eqref{eq:euler2}, for instance,
reads
\begin{align*} 
(A \dt \rho_h(t),\widetilde q_h) + (\dx m_h(t),\widetilde q_h) &= 0 \\
(\dt m_h(t),\widetilde v_h) + (\dx \left(\frac{m_h(t)^2}{A \rho_h(t)} + A p_h(t)\right),\widetilde v_h) &= -(\frac{\lambda}{2D} \frac{|m_h(t)|}{A \rho_h(t)} \pi_h^{k-1} m_h(t), \widetilde v_h).
\end{align*}
Numerical quadrature can be used in practice to facilitate the handling of the nonlinear terms. 
We do not give a complete convergence analysis here, but instead, we will demonstrate by numerical 
tests that for smooth solutions, the convergence results of Theorem~\ref{thm:convergence} remain valid, at least in the practically relevant case of nonlinear friction.

\section{Numerical results}
\label{sec:numerics}

We now illustrate the theoretical results of 
Section~\ref{sec:errorestimates} by numerical tests.
For our computations, we consider the pipe network depicted in Figure~\ref{fig:network}. 
As a first test case, we consider the linear problem \eqref{eq:network1}--\eqref{eq:network4} with 
inhomogeneous boundary conditions 
\begin{align}
p|_v(t) = p_v(t) \qquad v \in \V_\partial 
\end{align}
and we set $p_{v_1}(t)=1$ and $p_{v_6}(t)=1+\frac{1}{2}\sin(\pi t)$ in the following. 
All pipes are chosen of unit length $\ell=1$ and the model parameters are set to $a \equiv b \equiv d \equiv 1$.
The simulation is started from a stationary state for the boundary values at initial time.
The results of the computations are summarized in the left column of Table~\ref{tab:1}.
As predicted by our theoretical results, we observe second order convergence. 

\vspace*{-1em}

\begin{table}[ht]
\caption{Errors $e_h=(a\|p_h(T) - p_{h/2}(t)\|^2 + b\|m_h(T) - m_{h/2}(T)\|^2)^{1/2}$ at time $T=10$ obtained with the Petrov-Galerkin approximation for the network problem with different gas flow models: linear model (left), semilinear model (middle), and quasilinear model (right).}
\label{tab:1}       
\setlength\tabcolsep{2ex}
\centering
\begin{tabular}{c||c|c||c|c||c|c}
$h$      & linear   & eoc  & semilinear & eoc  & quasilinear & eoc \\
\hline
0.10000  & 0.01936  & --   & 0.02359    & --   & 0.02534     & --   \\
0.05000  & 0.00482  & 2.00 & 0.00660    & 1.83 & 0.00693     & 1.87 \\
0.02500  & 0.00120  & 2.00 & 0.00168    & 1.97 & 0.00200     & 1.79 \\
0.01250  & 0.00030  & 2.00 & 0.00042    & 1.99 & 0.00076     & 1.40 \\
0.00625  & 0.00008  & 2.00 & 0.00011    & 2.00 & 0.00036     & 1.09 \\[-1.5em]
\end{tabular} 
\end{table}
We now repeat our numerical tests for the same network but with a semilinear gas flow model resulting from  \eqref{eq:euler1}--\eqref{eq:euler3} by dropping the nonlinear term $\dx (\frac{m^2}{A \rho})$ in equation \eqref{eq:euler2}. The model parameters are chosen as $A=1$, $c=1$, and $\lambda/(2D)=7/2$; the latter was selected such that average of the resulting mass flow was similar to that of the linear model considered above. The computational results are depicted in the middle column of Table~\ref{tab:1}. Also for this nonlinear friction model, we observe second order convergence. These results can be explained theoretically in a similar way as those for the linear case by using a perturbation argument. 
In the right column of Table~\ref{tab:1}, we display the corresponding results for the quasilinear flow model 
\eqref{eq:euler1}--\eqref{eq:euler3} with the same parameters as used in the semilinear case. Note that a decrease in the convergence rates to first order is observed here. This is no surprise, since our analysis heavily relied on the anti-symmetry of the spatial derivative terms in \eqref{eq:var1h}--\eqref{eq:var2h}, which is no longer valid for the quasilinear model \eqref{eq:euler1}--\eqref{eq:euler2}. 

In Figure~\ref{fig:2}, we display the flow rates $m|_{v}$ at the boundary vertices $v_1$ and $v_6$
for the three different gass flow models discussed above as function function of time. 
\begin{figure}[ht!]
\centering
\vspace*{-1em}
\includegraphics[width=0.95\textwidth]{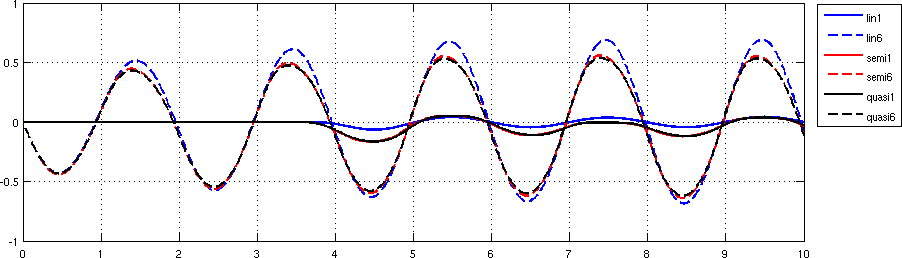} 
\caption{\label{fig:2}Flow rates at boundary vertices $v_1$ and $v_6$ for linear, semilinear, and quasilinear 
flow models.}
\end{figure}
The results are in reasonable agreement. In summary, the semilinear model seems to yield the best compromise between modelling errors and convergence order.

\begin{acknowledgement}
Support by the German Science Foundation (DFG) via grants TRR~154, TRR~146, GSC~233, and Eg-331/1-1 is 
gratefully acknowledged.
\end{acknowledgement}

\vspace*{-2em}


\end{document}